\newcommand{\sysn}{\left\{\begin{array}{rcl}}
\newcommand{\sysk}{\end{array}\right.}
\newtheorem{theorem}{Theorem}[section]
\newtheorem{lemma}[theorem]{Lemma}
\theoremstyle{example}
\newtheorem{proposition}[theorem]{Proposition}
\newtheorem{corollary}[theorem]{Corollary}
\theoremstyle{definition}
\newtheorem{definition}[theorem]{Definition}
\newtheorem{question}[theorem]{Question}
\journal{...}
\begin{document}

\title{Relatively functionally countable subsets of products}

\author{Anton E. Lipin}

\address{Krasovskii Institute of Mathematics and Mechanics, \\ Ural Federal
 University, Yekaterinburg, Russia}

\ead{tony.lipin@yandex.ru}

\begin{abstract} A subset $A$ of a topological space $X$ is called {\it relatively functionally countable} ({\it RFC}) in $X$, if for each continuous function $f : X \to \mathbb{R}$ the set $f[A]$ is countable. We prove that all RFC subsets of a product $\prod\limits_{n\in\omega}X_n$ are countable, assuming that spaces $X_n$ are Tychonoff and all RFC subsets of every $X_n$ are countable.
In particular, in a metrizable space every RFC subset is countable.

The main tool in the proof is the following result:
for every Tychonoff space $X$ and any countable set $Q \subseteq X$ there is a continuous function $f : X^\omega \to \mathbb{R}^2$ such that the restriction of $f$ to $Q^\omega$ is injective.
\end{abstract}

\begin{keyword} functionally countable subsets, countable products, metrizable spaces, Hilbert cube

\MSC[2020] 54A25, 54B10, 54C05, 54C30

\end{keyword}

\maketitle 


\section{Introduction}

This paper comes from the following

\begin{question}[A.V.~Osipov]
\label{IQ1}
Is there an uncountable set $A \subseteq [0,1]^\omega$ such that for every continuous function $f : [0,1]^\omega \to \mathbb{R}$ the set $f[A]$ is countable?
\end{question}

The author knows this problem from private communication with A.V.~Osipov.
In 2016 Osipov and A.~Miller proved that the answer is no under $\frak{b}>\omega_1$ or $\mathrm{cov}(\mathrm{meager})>\omega_1$, but they did not publish these results.
Apparently, no one ever raised Question \ref{IQ1} in literature, although one can find some discussion at this problem on mathoverflow \cite{mathoverflow}, where user fedja and T.~Banakh proved the negative answer under $\neg \mathrm{CH}$. In this paper we prove the negative answer in ZFC.

Instead of the term {\it projectively countable} as in \cite{mathoverflow}, let us use the following notion.

\begin{definition}
A subset $A$ of a topological space $X$ is called {\it relatively functionally countable} ({\it RFC}) in $X$, if for every continuous function $f : X \to \mathbb{R}$ the set $f[A]$ is countable.
\end{definition}

Note that being an RFC subset is not the same as being a functionally countable subspace.
For instance, if $A$ is an uncountable discrete and $X$ is one-point compactification of $A$, then $A$ is an RFC subset and is not a functionally countable subspace.
However, every functionally countable subspace is an RFC subset.
A nice criterion of functional countability for perfectly normal spaces can be found in \cite{tkachuk}.

The most complicated result of this paper is (rather technical) Theorem \ref{AT1}. Section \ref{A} is devoted entirely to its proof.
Corollaries of Theorem \ref{AT1} in terms of functional countability and, in particular, the negative answer to Question \ref{IQ1} can be found in Section \ref{B}.

\section{Preliminaries}

We assume the following notation and conventions.

\begin{itemize}

\item We use the term {\it tree} and associated notation as in \cite[Definition III.5.1]{kunen} or \cite[Definition 9.10]{jech} with the following amendments:
\begin{itemize}
\item instead of ``immediate successor'' we say {\it son};
\item the set of all leaves of a tree $T$ is denoted by $L(T)$;
\item we write $T \preceq P$ to say that $T$ is a subtree of $P$.
\end{itemize}

\item if a function $f: X \to \mathbb{R}^2$ is bounded, then we denote $||f||$ the uniform norm of $f$ in any norm of the plane $\mathbb{R}^2$.

\item the restriction of a function $f : X \to Y$ to a set $A \subseteq X$ is denoted by $f|_A$.

\item {\it countable} means not greater than countable.

\end{itemize}

The following proposition plays a significant role in our argumentation.

\begin{proposition}
\label{0P1}
Denote $\square = [-1,1]\times[0,1]$ and $$\bot = \{(x,y) \in \square : x=0 \text{ or } y=0\}.$$
Suppose we are given a countable set $Q \subseteq \square \setminus \{(0,0)\}$.
Then there is a continuous function $\varphi : \square \to \bot$ such that (whenever $x \in [-1,1]$ and $a \in Q$):
\begin{enumerate}
\item[(1)] $\varphi(x,0) = (x,0)$;
\item[(2)] $\varphi(x,1) = (0,1)$;
\item[(3)] $\varphi(a) \ne (0,0)$.
\end{enumerate}
\end{proposition}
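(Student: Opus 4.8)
The plan is to produce $\varphi$ as a composition $\varphi=\gamma\circ g$, where $g:\square\to[0,2]$ is a scalar ``progress'' function and $\gamma$ runs monotonically along one horizontal arm of $\bot$ and then up the vertical segment. Since $\square$ is connected while $\bot\setminus\{(0,0)\}$ has three components (and the three boundary points $(-1,0),(1,0),(0,1)$ must land in different ones by (1)--(2)), the set $\varphi^{-1}(0,0)$ cannot be empty: it must contain a continuum joining the two vertical sides of $\square$ and passing through $(0,0)$ on the bottom edge. The whole idea is therefore to make $\varphi^{-1}(0,0)$ exactly a prescribed ``V''-shaped arc and to choose that arc so that it avoids the countable set $Q$; the slope of the V is the free parameter that makes this possible.

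Concretely, since $Q$ is countable the set $\{\,a_2/|a_1| : (a_1,a_2)\in Q,\ a_1\neq0\,\}$ is countable, so fix $c\in(0,1)$ outside it and put $K:=\{(x,c|x|):x\in[-1,1]\}$. Then $K\cap Q=\varnothing$: a point of $Q\cap K$ with first coordinate $a_1\neq0$ would force $c=a_2/|a_1|$, while one with $a_1=0$ would be $(0,0)\notin Q$. Now define $g:\square\to[0,2]$ by
$$
g(x,y)=
\begin{cases}
1-|x|+y/c, & y\le c|x|,\\[4pt]
1+\dfrac{y-c|x|}{1-c|x|}, & y\ge c|x|,
\end{cases}
$$
which is well defined and continuous: the two branches agree (both equal $1$) on the fold $y=c|x|$, and $1-c|x|\ge1-c>0$, so the pasting lemma applies. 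One checks the routine facts $g[\square]\subseteq[0,2]$, $g(x,0)=1-|x|$, $g(x,1)=2$, and $g^{-1}(1)=K$ exactly. Finally let $\gamma_{\pm}:[0,2]\to\bot$ be given by $\gamma_{\pm}(t)=(\pm(1-t),0)$ for $t\le1$ and $\gamma_{\pm}(t)=(0,t-1)$ for $t\ge1$, and set $\varphi(x,y):=\gamma_{+}(g(x,y))$ for $x\ge0$ and $\varphi(x,y):=\gamma_{-}(g(x,y))$ for $x\le0$. On the common edge $x=0$ one has $g(0,y)=1+y\ge1$, so both formulas give $(0,y)$; hence $\varphi$ is a well-defined continuous map $\square\to\bot$.

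It remains to verify (1)--(3). For (1): on the bottom edge $g(x,0)=1-|x|\le1$, so for $x\ge0$, $\varphi(x,0)=\gamma_{+}(1-x)=(x,0)$, and symmetrically for $x\le0$. For (2): $g(x,1)=2$, hence $\varphi(x,1)=\gamma_{\pm}(2)=(0,1)$. For (3): $\gamma_{\pm}(t)=(0,0)$ only when $t=1$, so $\varphi(a)=(0,0)$ would force $g(a)=1$, i.e.\ $a\in K$, contradicting $K\cap Q=\varnothing$. The only points demanding genuine care are the continuity bookkeeping along the seam $x=0$ and along the fold $y=c|x|$, and — the real conceptual point rather than a difficulty — recognizing that the connectedness obstruction forces $\varphi^{-1}(0,0)$ to be a nondegenerate arc, so that a slope parameter is available to steer it clear of the countably many points of $Q$.
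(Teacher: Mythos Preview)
Your proof is correct. The core idea is the same as the paper's---split $\square$ into its left and right halves and exploit the countability of $Q$ to choose a slope that keeps $\varphi^{-1}(0,0)$ disjoint from $Q$---but the implementations differ. The paper proceeds purely geometrically in two stages: first a horizontal retraction $f_R$ of the right half onto the triangle $\{0\le x\le 1-y\}$, then an oblique projection $g_R$ of that triangle onto $\bot$ parallel to a direction chosen to miss $f_R[Q]$ (and symmetrically on the left, with an independently chosen direction). You instead factor through a single scalar ``progress'' map $g:\square\to[0,2]$ with explicit piecewise formulas, and then post-compose with path parametrizations $\gamma_\pm$ of the two arms of $\bot$; a single symmetric slope $c$ handles both halves at once. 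Your version is more formula-driven and makes the preimage $\varphi^{-1}(0,0)=K$ completely explicit from the start, which streamlines the verification of~(3); the paper's version is more pictorial and requires no algebraic bookkeeping along seams. Either way the substance is the same one-parameter avoidance argument. (Your closing remark that $\varphi^{-1}(0,0)$ must be a nondegenerate continuum is good motivation, though it is not needed for the formal argument.)
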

\begin{proof}
	For an illustration see Figure 1 below.
	
	Let us at first define the restriction $\varphi_R$ of $\varphi$ to the set  $\square_R = \{(x,y) \in \square : x \geq 0\}$. Denote by $\triangle_R$ the triangle $\{(x,y) \in \square_R : 0 \leq x \leq 1-y\}$ and $\bot_R = \{(x,y) \in \bot : x \geq 0\}$. For every $(x_0,y_0) \in \square_R$ denote by $f_R(x_0,y_0)$ the nearest to $(x_0,y_0)$ point, belonging both to $\triangle_R$ and the horizontal line $y=y_0$. In particular, $f_R(p)=p$ whenever $p \in \triangle_R$.
	
	Since $Q$ is countable, there is a point $p_R = (x_R, y_R)$ such that $x_R > 0$, $y_R > 0$ and the ray $((0,0),p_R)$ contains no points of the set $f_R[Q \cap \square_R]$. Denote by $g_R$ the projection of $\triangle_R$ onto $\bot_R$ parallel to the direction $(p_R, (0,0))$. Denote by $\varphi_R$ the composition of $f_R$ and $g_R$.
	
	In a dual way we construct the function $\varphi_L$ on the set $\square_L = \{(x,y) \in \square : x \leq 0\}$. Finally, define $\varphi(x,y)$ to be $\varphi_R(x,y)$ for $x \geq 0$ and $\varphi_L(x,y)$ for $x < 0$.
\end{proof}

\begin{picture}(360, 70)
	\put(50,20){\line(1,0){100}}
	\put(50,20){\line(0,1){50}}
	\put(150,70){\line(-1,0){100}}
	\put(150,70){\line(0,-1){50}}
	
	\put(50,20){\line(1,1){50}}
	\put(150,20){\line(-1,1){50}}
	
	\put(50,30){\vector(1,0){10}}
	\put(50,40){\vector(1,0){20}}
	\put(50,50){\vector(1,0){30}}
	\put(50,60){\vector(1,0){40}}
	\put(50,70){\vector(1,0){50}}
	
	\put(150,30){\vector(-1,0){10}}
	\put(150,40){\vector(-1,0){20}}
	\put(150,50){\vector(-1,0){30}}
	\put(150,60){\vector(-1,0){40}}
	\put(150,70){\vector(-1,0){50}}
	
	\put(75, 5){$f_L$ and $f_R$}
	
	\put(210,20){\line(1,0){100}}
	\put(210,20){\line(1,1){50}}
	\put(310,20){\line(-1,1){50}}
	
	\put(260,20){\line(0,1){50}}
	
	\put(300,50){\circle*{5}}
	\put(303,40){$p_R$}
	\put(260,20){\line(4,3){40}}
	\put(300,30){\vector(-4,-3){13}}
	\put(290,40){\vector(-4,-3){26}}
	\put(280,50){\vector(-4,-3){20}}
	\put(270,60){\vector(-4,-3){10}}
	
	\put(220,40){\circle*{5}}
	\put(207,32){$p_L$}
	\put(260,20){\line(-2,1){40}}
	\put(220,30){\vector(2,-1){20}}
	\put(230,40){\vector(2,-1){30}}
	\put(240,50){\vector(2,-1){20}}
	\put(250,60){\vector(2,-1){10}}
	
	\put(235, 5){$g_L$ and $g_R$}
\end{picture}
\begin{center}
	Figure 1
\end{center}

Also let us note a trivial fact on RFC sets.

\begin{proposition}
\label{0P2}
If a subset $A$ of a topological space $X$ is RFC, then for every natural $n$ and any continuous $f : X \to \mathbb{R}^n$ the set $f[A]$ is countable.
\end{proposition}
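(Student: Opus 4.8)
The plan is to reduce everything to the one-dimensional definition of RFC by composing $f$ with the coordinate projections of $\mathbb{R}^n$. Fix a continuous map $f : X \to \mathbb{R}^n$ and, for $i = 1, \dots, n$, let $\pi_i : \mathbb{R}^n \to \mathbb{R}$ be the $i$-th coordinate projection. Each composition $\pi_i \circ f : X \to \mathbb{R}$ is continuous, so by the hypothesis that $A$ is RFC in $X$, the set $(\pi_i \circ f)[A] = \pi_i\bigl[f[A]\bigr]$ is countable.

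The second step is a trivial containment: every point of $f[A]$ is determined by its $n$ coordinates, so
\[
f[A] \subseteq \pi_1\bigl[f[A]\bigr] \times \dots \times \pi_n\bigl[f[A]\bigr].
\]
The right-hand side is a finite product of countable sets, hence countable, and therefore $f[A]$ is countable as well. This finishes the argument; an easy induction on $n$ (peeling off one coordinate at a time) could be substituted for the explicit product bound if one prefers.

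There is no real obstacle here — the statement is essentially a restatement of the definition once one observes that finite products of countable sets are countable. The only thing to keep in mind is that we genuinely need $n$ to be finite (so that the product of the coordinate images stays countable); the analogous claim for maps into $\mathbb{R}^\omega$ is false in general and is exactly the subtle phenomenon that the rest of the paper has to work to control.
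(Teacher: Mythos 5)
Your argument is correct and is exactly the standard one intended here: the paper states this proposition without proof as a triviality, and composing $f$ with the coordinate projections and bounding $f[A]$ by the finite product of the countable images is precisely the reasoning being taken for granted. Your closing remark that the claim fails for maps into $\mathbb{R}^\omega$ is a sensible sanity check on why finiteness of $n$ matters.
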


\section{Constructing maps into the plane $\mathbb{R}^2$}
\label{A}

In this section we use functions, whose codomains are sets of some special form in the plane $\mathbb{R}^2$. To define this special form we need the following two definitions.

\begin{definition}
We say that a triple $(p,I,U)$ is a {\it sprig}, if the following three conditions are satisfied:
\begin{enumerate}
\item[(a)] $p$ is a point in $\mathbb{R}^2$;
\item[(b)] $I$ is a line segment in $\mathbb{R}^2$ and $p$ is an endpoint for $I$;
\item[(c)] $U$ is a neighborhood of $I \setminus \{p\}$.
\end{enumerate}
\end{definition}

\begin{definition}
We say that a rooted tree $(T,<)$ of finite height is an {\it oak}, if all its elements are sprigs and every $t = (p,I,U) \in T$ satisfies the following two conditions:
\begin{enumerate}
\item[(A)] if $(q,J,V)$ is a son of $t$, then $q \in I \setminus \{p\}$, $\overline{V} \subseteq U$ and $\overline{V} \cap I = \{q\}$ \\ (in particular, $J \cap I = \{q\}$);
\item[(B)] if $(q_1,J_1,V_1)$ and $(q_2,J_2,V_2)$ are distinct sons of $t$, then $\overline{V_1} \cap \overline{V_2} = \emptyset$.
\end{enumerate}

If $(T,<)$ is an oak and $T = \{(p_t,I_t,U_t) : t \in T\}$, then we denote:
\begin{itemize}
\item $Y(T) = \bigcup\limits_{t \in T} I_t$;
\item $K(T) = \{p_t : t \in T\}$ (from \underline{K}nots);
\item $C(T) = \bigcup\limits_{t \in L(T)} (I_t \setminus \{p_t\})$ (from \underline{C}rown);
\item $R(T) = Y(T) \setminus C(T)$.
\end{itemize}
\end{definition}

Note that the order $<$ of an oak $(T,<)$ is determined by the family $T$, so we lose no information writing $T$ instead of $(T,<)$.

\medskip

\begin{picture}(360, 80)
	\put(120,40){\circle*{7}}
	\put(120,40){\line(1,0){100}}
	\put(180,40){\oval(120,80)}
	
	\put(150,40){\circle*{5}}
	\put(150,40){\line(1,1){30}}
	\qbezier(150,40)(185,40)(185,75)
	\qbezier(150,40)(150,75)(185,75)
	
	\put(190,40){\circle*{5}}
	\put(190,40){\line(0,-1){30}}
	\put(190,22){\circle{36}}
	
	\put(190,20){\circle*{3}}
	\put(190,20){\line(-1,0){10}}
	\put(182,20){\circle{16}}
\end{picture}
\begin{center}
	Figure 2. An oak with four sprigs.
\end{center}

We will work with functions of the form $X \to Y(T)$, where $X$ is a space and $T$ is an oak.
We need these functions as approximations for the function we construct in Theorem \ref{AT1} below.

\begin{definition}
Suppose $T \preceq P$ are oaks and $X$ is a space.
We say that a function $g : X \to Y(P)$ is a {\it $(T,P)$-evolution} of a function $f : X \to Y(T)$, if for all $a \in X$ we have:
\begin{enumerate}
	\item[(1)] if $f(a) \in R(T)$, then $g(a) = f(a)$;
	
	\item[(2)] if $f(a) \in I$ for $(p,I,U) = t \in L(T)$, then either $g(a) \in I$ or $g(a) \in J$ for some son $(q,J,V)$ of $t$ in $P$.
\end{enumerate}
\end{definition}

\begin{definition}
Suppose $T$ is an oak, $X$ is a space and $Q \subseteq X$. We say that a continuous function $f : X \to Y(T)$ is a {\it $(T,X,Q)$-lifting}, if $f[Q] \subseteq C(T)$.
\end{definition}

\begin{definition}
Suppose $T$ is an oak, $X$ is a space, $Q \subseteq X$ and $\mathcal{W}$ is a cover of the set $Q$.
We say that a $(T,X,Q)$-lifting $f$ is {\it $(T,X,Q,\mathcal{W})$-splitting},
if for every leaf $(p,I,U) \in L(T)$ either $f^{-1}[I] \cap Q = \emptyset$ or
there is $W \in \mathcal{W}$ such that $f^{-1}[I] \cap Q \subseteq W$.
\end{definition}

\begin{lemma}
\label{AL1}
Suppose $T$ is an oak, $X$ is a Tychonoff space, a set $Q \subseteq X$ is countable, $f$ is a $(T,X,Q)$-lifting,
$\mathcal{W}$ is an open cover of the set $Q$ and $\varepsilon>0$.
Then there is an oak $P \succeq T$ and a $(P,X,Q,\mathcal{W})$-splitting $(T,P)$-evolution $g$ of the function $f$ such that $||f - g||<\varepsilon$.
\end{lemma}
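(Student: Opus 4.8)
The plan is to keep $f$ unchanged on $f^{-1}[R(T)]$ and to modify it separately over each leaf of $T$. First I would observe that $X$ is the disjoint union of the closed set $f^{-1}[R(T)]$ and the open sets $G_t:=f^{-1}[I_t\setminus\{p_t\}]$, $t\in L(T)$ (using that the leaf segments $I_t$ are pairwise disjoint, which is easy from axioms (A)--(B)), and that $f[Q]\subseteq C(T)$ forces $Q=\bigsqcup_{t\in L(T)}Q_t$ with each $Q_t:=Q\cap G_t$ countable. Over a leaf $t$ whose $Q_t$ already lies in a single member of $\mathcal W$ (in particular if $Q_t=\emptyset$) I would add no sons and put $g=f$ on $G_t$; the work is then over the remaining ``bad'' leaves, handled one at a time.

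Fix a bad leaf $t=(p,I,U)$. Since $Q_t$ is countable it is a zero-dimensional Lindel\"of subspace of $X$, so the trace of $\mathcal W$ on $Q_t$ refines to a countable partition $Q_t=\bigsqcup_i V_i$ into sets clopen in $Q_t$, with $V_i\subseteq W_i$ for some $W_i\in\mathcal W$. Enumerate $Q_t=\{q_0,q_1,\dots\}$, with $q_k\in V_{i(k)}$; since $V_{i(k)}$ is closed in $Q_t$ one has $q_k\notin\overline{Q_t\setminus V_{i(k)}}$. Then, recursively in $k$, complete regularity lets me pick a short ``hair'' to be hung on $t$ — a sprig $(r_k,J_k,V_k')$ with $r_k\in I\setminus\{p\}$ close to $f(q_k)$, $J_k$ a tiny segment leaving $r_k$ transversally to $I$, and $V_k'$ a thin neighbourhood of $J_k\setminus\{r_k\}$ whose closure lies in $U$, meets $Y(T)$ only at $r_k$, and is disjoint from $\overline{V_j'}$ for $j<k$ — together with a continuous $h_k:X\to[0,1]$ with $h_k(q_k)=1$, $\{h_k\ne0\}\subseteq W_{i(k)}\cap G_t$, $\{h_k\ne0\}\cap Q_t\subseteq V_{i(k)}$, and $\{h_k\ne0\}$ avoiding the support of every earlier $h_j$ with $i(j)\ne i(k)$; I would also arrange $\operatorname{diam} J_k\to0$ and $|r_k-f(q_k)|\to0$. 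Adjoining all these hairs to $T$ as new sons yields an oak $P\succeq T$.

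Next I would define $g$: put $g=f$ on $f^{-1}[R(T)]$, and on $G_t$ use the hairs to ``push'' — for $x$ with some $h_k(x)>0$, slide $f(x)$ along $I$ towards $r_k$ and then climb $J_k$, the displacement being a fixed continuous function of $h_k(x)$ which vanishes at $h_k(x)=0$ and reaches a point of $J_k\setminus\{r_k\}$ at $h_k(x)=1$; elsewhere $g=f$. Granting that $g$ is continuous, the rest is bookkeeping: $g$ is a $(T,P)$-evolution of $f$; $g[Q]\subseteq C(P)$, since $q_k$ is sent into $J_k\setminus\{r_k\}$; $\|f-g\|<\varepsilon$ once the hairs are short enough; and $g^{-1}[J_k]\cap Q\subseteq\{h_k\ne0\}\cap Q\subseteq W_{i(k)}$, while $g^{-1}[I_s]\cap Q$ lies in a single member of $\mathcal W$ for every untouched leaf $s$ — so $g$ is $(P,X,Q,\mathcal W)$-splitting.

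The hard part will be the continuity of $g$. One cannot in general make the sets $\{h_k\ne0\}$ pairwise disjoint: a clopen partition of the countable subspace $Q_t$ of a merely Tychonoff — hence possibly non-normal — space need not extend to a disjoint family of open subsets of $X$, and $Q_t$ may accumulate. Where two pushes overlap, and at a limit point of $Q_t$ (such points lie outside $Q$, so there $g$ must fall back to $f$) where infinitely many pushes pile up, the instructions must be reconciled; this is exactly what dictates the recursive choices above — keeping each $h_k$ off the conflicting earlier supports and off $Q_t\setminus V_{i(k)}$, and letting $\operatorname{diam} J_k$ and $|r_k-f(q_k)|$ shrink — and, in the cases where no clean disjoint choice exists, it forces the hairs to be hung in several carefully nested levels, so that an overlap of pushes corresponds to climbing one hair and then a sub-hair rather than two incomparable hairs. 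The substance of the argument is to show that all of these choices can be made simultaneously and that the resulting $g$ is genuinely continuous; I expect continuity at the accumulation points of $Q_t$ to be the crux.
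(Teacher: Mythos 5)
Your overall strategy (hang short ``hairs'' on the leaf segments, push points of $Q$ up the hairs with bump functions, keep all displacements $\varepsilon$-small) points in the right direction, but the proposal has a genuine gap exactly where you yourself locate it: you never construct the reconciliation of overlapping pushes, you only assert that ``carefully nested levels'' of hairs should make it work. That reconciliation is not bookkeeping; it is the whole content of the lemma. Two concrete failures as written. First, continuity: since $X$ is only Tychonoff and $Q_t$ may be dense in itself, the sets $\{h_k\ne 0\}$ with $i(j)=i(k)$ cannot in general be made disjoint, and at a point where several (or infinitely many) $h_k$ are positive the instructions ``slide along $I$ towards $r_k$, then climb $J_k$'' are incompatible; no rule for combining them is given. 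Your parenthetical that limit points of $Q_t$ lie outside $Q$ is false (take $Q=\mathbb{Q}$ inside $X=\mathbb{R}$, all of which may land in one leaf segment), so you cannot simply ``fall back to $f$'' at accumulation points --- those points must themselves climb hairs. Second, the lifting property: a point $q_m\in Q_t$ that is only partially pushed by some $h_k$ (with $0<h_k(q_m)<1$) can land exactly on the knot $r_k$; since the old leaf $t$ acquires sons, $r_k$ lies in $R(P)$, not in $C(P)$, so $g$ would fail to be a $(P,X,Q)$-lifting. Nothing in your construction rules this out.

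The paper avoids both problems by processing $Q$ one point per step rather than one leaf at a time. At step $n$ a single new sprig is added for the single point $a_n$, and the modification is $f_{n+1}=\psi_n\circ(f_n,h_n)$, where $\psi_n$ comes from a continuous retraction of $\overline{H_n}\times[0,1]$ onto $\overline{H_n}\cup J_n$; continuity of each $f_{n+1}$ is therefore automatic (a composition of continuous maps), and $g$ is a uniform limit of continuous functions, so no simultaneous pushes ever have to be reconciled. The knot-avoidance is supplied by condition (3) of Proposition~\ref{0P1}: the projection direction defining the retraction is chosen so that the countably many graph points coming from $Q$ miss the knot. This is precisely the device your argument lacks. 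To salvage the leaf-by-leaf plan you would need to build analogues of both mechanisms; as it stands the proposal is a plan rather than a proof.
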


\begin{proof}
Fix any indexing $Q = \{a_n : n \in \omega\}$.
Denote $f_0 = f$ and $T_0 = T$.
Now let us assume that for some $n \in \omega$ we have an oak $T_n \succeq T$ and a continuous function $f_n : X \to Y(T_n)$ with the following properties:
\begin{enumerate}
\item[(A1$\empty_n$)] the set $T_n \setminus T$ is finite;
\item[(A2$\empty_n$)] $f_n[Q] \cap (R(T) \cup K(T_n)) = \emptyset$;
\item[(A3$\empty_n$)] for every $(p,I,U) \in T_n \setminus T$ there is $W \in \mathcal{W}$ such that $f_n^{-1}[I] \subseteq W$.
\end{enumerate}

Note that for $n = 0$ all these conditions are satisfied. In particular, (A2$\empty_0$) follows from the assumption that the function $f$ is $(T,X,Q)$-lifting.

Let us take $t_n = (p_n,I_n,U_n) \in T_n$ such that $f_n(a_n) \in I_n$.
If $t_n \notin T$, define $T_{n+1} = T_n$ and $f_{n+1} = f_n$. In this case all the recursion assumptions are clearly satisfied.

Suppose $t_n \in T$. By (A2$\empty_n$) we have $t_n \in L(T)$. Note that by (A1$\empty_n$) $t_n$ has only finite set of sons in $T_n$.

Choose an open subinterval $H_n \subseteq I_n$ which contains the point $f_n(a_n)$ and is small enough to satisfy the following two conditions:
\begin{enumerate}
\item[(H1)] the length of $H_n$ is less than $\varepsilon / 2^{n+2}$;
\item[(H2)] $\overline{H_n} \cap K(T_n) = \emptyset$.
\end{enumerate}

Take any sprig $s_n=(q_n, J_n, V_n)$ such that
\begin{enumerate}
\item[(S1)] $q_n \in H_n \setminus f_n[Q]$; 
\item[(S2)] $T_{n+1} = T_n \cup \{s_n\}$ is an oak;
\item[(S3)] the length of $J_n$ is less than $\varepsilon / 2^{n+2}$.
\end{enumerate}

Suppose $a_n \in W_n \in \mathcal{W}$. Denote $O_n = W_n \cap f_n^{-1}[H_n]$; note that it is an open set.
Choose any continuous function $h_n : X \to [0,1]$ such that $h_n(a_n) = 1$ and $h_n[X \setminus O_n] = \{0\}$.
Construct $\Gamma_n = \{(f_n(a),h_n(a)) : a \in X\}$; endow $\Gamma_n$ with a topology of a subspace of $Y(T_n) \times [0,1]$.

Now let us take any continuous function $\varphi_n : \overline{H_n} \times [0,1] \to \overline{H_n} \cup J_n$ with the properties as in Proposition \ref{0P1}, namely that, whenever $x \in \overline{H_n}$ and $a \in Q \cap O_n$:
\begin{enumerate}
\item[(F1)] $\varphi_n(x,0)=x$;
\item[(F2)] $\varphi_n(x, 1) \in J_n \setminus \{q_n\}$;
\item[(F3)] $\varphi_n(f_n(a), h_n(a)) \ne q_n$.
\end{enumerate}

Define $\psi_n : \Gamma_n \to Y(T_{n+1})$ as follows:
$$
\psi_n(x,y) = 
\begin{cases}
	\varphi_n(x,y), \; x \in \overline{H_n}; \\
	x, \; x \notin \overline{H_n}.
\end{cases}$$

The function $\psi_n$ is continuous, because for a point $(x,y) \in \Gamma_n$, where $x$ is from the border of $H_n$, we have $y = 0$, so $\varphi_n(x,y) = x$.

Define the function $f_{n+1} : X \to Y(T_{n+1})$ in the following way:
$$f_{n+1}(a) = \psi_n(f_n(a),h_n(a)).$$

In particular, $f_{n+1}(a) = f_n(a)$ for $a \in X \setminus O_n$.
The function $f_{n+1}$ is continuous as a composition of continuous functions.

Let us show that $T_{n+1}$ and $f_{n+1}$ satisfy (A1$\empty_{n+1}$--A3$\empty_{n+1}$).

(A1$\empty_{n+1}$) follows from (A1$\empty_n$) and the fact that $T_{n+1} \setminus T_n = \{s_n\}$.

(A2$\empty_{n+1}$). Let us take any $a \in Q$ and show that $f_{n+1}(a)$ does not belong to $E = R(T) \cup K(T_{n+1}) = R(T) \cup K(T_n) \cup \{q_n\}$.
There are two cases:
\begin{itemize}
	\item[1.] If $a \in X \setminus O_n$, then $f_{n+1}(a) = f_n(a)$. Here we refer to (A2$\empty_n$) and (S1).
	
	\item[2.] If $a \in O_n$, then $f_{n+1}(a) \in \overline{H_n} \cup J_n$.
	The only common point this set have with $E$ is $q_n$. By (F3) and definition of $f_{n+1}$ we have $f_{n+1}(a) \ne q_n$.
\end{itemize}

(A3$\empty_{n+1}$). For the sprig $s_n=(q_n, J_n, V_n)$ we have $f_{n+1}^{-1}[J_n] \subseteq W_n$. For all other sprigs in $T_{n+1} \setminus T$ we refer to (A3$\empty_n$).

It follows from (H1), (S3) and definition of $f_{n+1}$ that $||f_{n+1} - f_n|| < \varepsilon / 2^{n+1}$.
Thus, the sequence $(f_n)_{n \in \omega}$ uniformly converges to some continuous function $g : X \to \mathbb{R}^2$ such that $||f - g||<\varepsilon$.

The tree $P = \bigcup\limits_{n \in \omega} T_n \succeq T$ is an oak. Note that all the sprigs $s_n$ are leaves of $P$ and height of $P$ is not greater than the height of $T$ plus 1.

Let us show that $g[X] \subseteq Y(P)$ and that $g$ is $(T,P)$-evolution of $f$. Take any $a \in X$. We have $f(a) \in I$ for some $(p,I,U) \in T$. There are two cases:
\begin{itemize}
	\item[I.] $f_n(a) \in I$ for all $n \in \omega$. In this case $g(a) \in I \subseteq Y(P)$ as well.
	
	\item[II.] There is $n \in \omega$ such that $f_n(a) \in I$ and $f_{n+1}(a) \notin I$. This is only possible if $I = I_n$ and $f_{n+1}(a) \in J_n$. In this case $f_m(a) = f_{n+1}(a)$ for all $m > n$, hence $g(a) = f_{n+1}(a) \in J_n$.
\end{itemize}

Moreover, if $f(a) \in R(T)$, then $f_n(a) = f(a)$ for all $n \in \omega$ and so $g(a) = f(a)$. Thus, $g$ is a $(T,P)$-evolution of $f$.

Finally, let us show that $g$ is $(P,X,Q,\mathcal{W})$-splitting.
First we have to prove that $g$ is $(P,X,Q)$-lifting, i.e. that $g[Q] \subseteq C(P)$.
Take any $a_n \in Q$. Depending on the case, we have either $t_n \notin T$, so $t_n$ is a leaf $s_k$ for some $k < n$ and $g(a_n) = f_{k+1}(a_n) \in J_k \setminus \{q_k\} \subseteq C(P)$, or $t_n \in T$, so $g(a_n) = f_{n+1}(a_n) \in J_n \setminus \{q_n\} \subseteq C(P)$.

Now take any leaf $t = (p,I,U) \in L(P)$. There are two cases:
\begin{itemize}
	\item[i.] $t = s_n$ for some $n \in \omega$. Here we have $g^{-1}[I \setminus \{q_n\}] = f_{n+1}^{-1}[I \setminus \{q_n\}]$ and $f_{n+1}^{-1}[I] \subseteq W_n$. Moreover, $g^{-1}(q_n) \cap Q = \emptyset$, because $g$ is $(P,X,Q)$-lifting. Hence, $g^{-1}[I] \cap Q \subseteq W_n$.
	
	\item[ii.] $t$ is a leaf of $T$ and $t$ has no sons in $P$. Hence, $g^{-1}[I] \cap Q = \emptyset$.
\end{itemize}
\end{proof}

\begin{proposition}
\label{AP1}
	Suppose $X$ is a $T_1$-space and a set $Q \subseteq X$ is countable. Then it is possible to choose for every $n \in \mathbb{N}$ an open cover $\mathcal{W}_n$ of the set $Q^n$ in the space $X^n$ in such a way that for any distinct points $a,b \in Q^\omega$ there is $n$ such that no element of $\mathcal{W}_n$ contains both $a|_n$ and $b|_n$.
\end{proposition}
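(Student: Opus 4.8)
The plan is to reduce the conclusion to a countable list of elementary separation tasks, solve each one with a two-element open cover, and then merge finitely many tasks into each level by taking a common refinement. Fix an injective enumeration $Q = \{q_k : k < \kappa\}$ with $\kappa \le \omega$. Since $X$ is $T_1$, each singleton $\{q_k\}$ is closed, so for every $n$, every $m < n$ and every $k < \kappa$ the set $B^n_{m,k} := \{x \in X^n : x(m) \ne q_k\}$ is open in $X^n$, being the preimage of an open set under the $m$-th coordinate projection. The key observation is that if $a \ne b$ in $Q^\omega$, then $a(m) \ne b(m)$ for some $m$, and writing $a(m) = q_i$ and $b(m) = q_j$ we get $i \ne j$ by injectivity of the enumeration; so it is enough to ensure that for each triple $(m,i,j)$ with $i \ne j$ there is a level $n > m$ such that no member of $\mathcal{W}_n$ contains two points of $X^n$ whose $m$-th coordinates are $q_i$ and $q_j$.

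The building block is the following. For fixed $n$, $m < n$ and $i \ne j$, the pair $\{B^n_{m,i}, B^n_{m,j}\}$ is an open cover of $X^n$, since $q_i \ne q_j$ forces every point into at least one of the two sets; and it already performs the required separation, because if $c(m) = q_i$ and $d(m) = q_j$, then $c \notin B^n_{m,i}$, so the only member that can contain $c$ is $B^n_{m,j}$, and that set misses $d$.

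To combine tasks, at level $n$ I would take the finitely many triples $(m,i,j)$ with $m < n$ and $i, j < \min(n,\kappa)$ and let $\mathcal{W}_n$ be the common refinement of their two-element covers: the finite family of all sets of the form $\bigcap_{(m,i,j)} S_{m,i,j}$, where the intersection runs over these triples and each $S_{m,i,j}$ is $B^n_{m,i}$ or $B^n_{m,j}$. This is a finite family of open subsets of $X^n$ that still covers $X^n$, and refining a cover cannot destroy a separation — a member containing both $c$ and $d$ would lie inside a member of the coarser cover containing both — so $\mathcal{W}_n$ separates the values $q_i, q_j$ at coordinate $m$ for every one of those triples. Finally, given distinct $a, b \in Q^\omega$ and any coordinate $m$ with $a(m) = q_i \ne q_j = b(m)$, the level $n := \max(m,i,j) + 1$ has $(m,i,j)$ in its list, so no member of $\mathcal{W}_n$ contains both $a|_n$ and $b|_n$, which is exactly what is wanted. (If $\kappa \le 1$, then $Q^\omega$ has at most one point, all $\mathcal{W}_n$ collapse to $\{X^n\}$, and there is nothing to prove.)

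The main obstacle is just the bookkeeping: a single $\mathcal{W}_n$ cannot be made to separate all pairs of points of $Q^n$, since that would force $Q$ to be discrete in $X$, so the separation tasks have to be distributed over the levels. What makes this painless is that each individual task requires only a two-element cover, and that any finite batch of tasks can be served at one level by passing to a common refinement without losing separations — so the task $(m,i,j)$ is handled automatically as soon as $n$ exceeds $m$, $i$, and $j$.
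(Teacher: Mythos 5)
Your proof is correct and is essentially the same argument as the paper's: both distribute the separation tasks over the levels, use the $T_1$ axiom to separate finitely many points of $Q$ at level $n$, and conclude with the same choice $n=\max\{m,i,j\}+1$. The only cosmetic difference is in assembling the cover — you take a common refinement of two-element covers of $X^n$, while the paper first builds a cover $\mathcal{U}_n$ of $Q$ in $X$ whose members meet the first $n$ points of $Q$ in at most one point and then takes its $n$-fold product.
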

\begin{proof}
Choose any indexing $Q = \{x_n : n \in \omega\}$ and denote $D_n = \{x_k : k < n\}$. The set $D_n$ is finite, so there is an open cover $\mathcal{U}_n$ of $Q$ in $X$ such that $|U \cap D_n| \leq 1$ whenever $U \in \mathcal{U}_n$.
Define $$\mathcal{W}_n = \left\{\prod\limits_{k<n} U_k : U_k \in \mathcal{U}_n\right\}.$$

Clearly, $\mathcal{W}_n$ is an open cover of $Q^n$ in $X^n$. Now, if $a$ and $b$ are distinct points of $Q^\omega$, then there is $k \in \omega$ such that $a(k) = x_i \ne x_j = b(k)$. Denote $n = \max\{k, i, j\}+1$. We have $a(k),b(k) \in D_n$, so no element of $\mathcal{U}_n$ contains both $a(k)$ and $b(k)$, hence no element of $\mathcal{W}_n$ contains both $a|_n$ and $b|_n$.
\end{proof}

\begin{theorem}
\label{AT1}
Suppose $X$ is a Tychonoff space and a set $Q \subseteq X$ is countable. Then there is a continuous function $f : X^\omega \to \mathbb{R}^2$ such that its restriction to $Q^\omega$ is injective.
\end{theorem}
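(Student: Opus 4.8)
The plan is to build $f$ as the uniform limit of continuous functions $g_n:X^\omega\to\mathbb{R}^2$, each of which factors through the projection $\pi_n:X^\omega\to X^n$ onto the first $n$ coordinates. The reason for descending to the finite powers is that, although $Q^\omega$ is uncountable, every $Q^n$ is countable, so Lemma~\ref{AL1} is available on each $X^n$; this is really the only new idea needed beyond that lemma. We may assume $Q\ne\emptyset$, since otherwise a constant function works. First we fix, once and for all, open covers $\mathcal{W}_n$ of $Q^n$ in $X^n$ with the separation property of Proposition~\ref{AP1}, and we write $\rho_n:X^{n+1}\to X^n$ for the projection.

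Next I would construct, by recursion on $n$, oaks $T_0\preceq T_1\preceq\cdots$ together with $(T_n,X^n,Q^n)$-liftings $\bar g_n:X^n\to Y(T_n)$. Take $T_0$ to be a one-sprig oak $\{(p_0,I_0,U_0)\}$, whose root $I_0$ is also its unique leaf, and let $\bar g_0$ map the single point of $X^0$ into $I_0\setminus\{p_0\}=C(T_0)$. Given $T_n$ and $\bar g_n$, observe that $\bar g_n\circ\rho_n$ is a $(T_n,X^{n+1},Q^{n+1})$-lifting, and apply Lemma~\ref{AL1} to it with the cover $\mathcal{W}_{n+1}$ and $\varepsilon=2^{-(n+1)}$; this yields an oak $T_{n+1}\succeq T_n$ and a $(T_{n+1},X^{n+1},Q^{n+1},\mathcal{W}_{n+1})$-splitting $(T_n,T_{n+1})$-evolution $\bar g_{n+1}$ with $||\bar g_{n+1}-\bar g_n\circ\rho_n||<2^{-(n+1)}$. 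Since every splitting lifting is a lifting, the recursion continues. Setting $g_n=\bar g_n\circ\pi_n$, each $g_n$ is continuous, satisfies $g_n[Q^\omega]\subseteq C(T_n)$, and each $g_{n+1}$ is a $(T_n,T_{n+1})$-evolution of $g_n$ (both facts transfer from the $\bar g_n$ by composing with $\pi_n$, using $\pi_n=\rho_n\circ\pi_{n+1}$); moreover $||g_n-g_{n+1}||<2^{-(n+1)}$, so the $g_n$ converge uniformly to a continuous $f:X^\omega\to\mathbb{R}^2$.

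The last step is to check that $f$ is injective on $Q^\omega$. Given distinct $a,b\in Q^\omega$, I would use Proposition~\ref{AP1} to fix an $m$ such that no member of $\mathcal{W}_m$ contains both $a|_m$ and $b|_m$. Because $\bar g_m$ is a lifting, $g_m(a)\in I_s\setminus\{p_s\}$ and $g_m(b)\in I_{s'}\setminus\{p_{s'}\}$ for some leaves $s,s'\in L(T_m)$. If $s=s'$, then $a|_m$ and $b|_m$ both belong to the nonempty set $\bar g_m^{-1}[I_s]\cap Q^m$, which by the splitting property lies inside a single member of $\mathcal{W}_m$ — contradicting the choice of $m$; hence $s\ne s'$. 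Let $t$ be the largest common predecessor of $s$ and $s'$ in $T_m$, and let $u\le s$, $u'\le s'$ be the two distinct sons of $t$ on the branches toward $s$ and $s'$, so that $\overline{U_u}\cap\overline{U_{u'}}=\emptyset$ by oak condition~(B). An induction on $k\ge m$ — using the evolution property, the lifting property $g_k[Q^\omega]\subseteq C(T_k)$, and oak condition~(A) — shows that $g_k(a)$ always lies in $I_v$ for some $v\ge u$ in $T_k$, hence $g_k(a)\in\overline{U_u}$; letting $k\to\infty$ gives $f(a)\in\overline{U_u}$ and, symmetrically, $f(b)\in\overline{U_{u'}}$, so $f(a)\ne f(b)$.

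The genuinely hard work is already packaged in Lemma~\ref{AL1}; what I expect to be the delicate point here is the interface — keeping the $g_n$ coherent (each an evolution of the previous one and a lifting) even though the underlying maps $\bar g_n$ live on different finite powers — together with the final verification that the separation property of Proposition~\ref{AP1}, fed one coordinate block at a time through the splittings, really forces two distinct points of $Q^\omega$ into the disjoint closed neighborhoods guaranteed by the oak axioms.
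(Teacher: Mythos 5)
Your proposal is correct and takes essentially the same route as the paper: both build the map as a uniform limit of functions factoring through the finite powers $X^n$, obtained by iterating Lemma~\ref{AL1} against the covers of Proposition~\ref{AP1}, and both separate $f(a)$ from $f(b)$ by trapping the orbits of $a$ and $b$ in the disjoint closed neighborhoods attached to diverging branches of the oaks. The only differences are cosmetic (an index shift, and your more explicit branch-point argument for why distinct leaves have disjoint neighborhood closures, which the paper leaves implicit).
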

\begin{proof}
Choose families $\mathcal{W}_n$ with the properties from Proposition \ref{AP1}.
Take a sprig $t=(p,I,U)$, where $p=(0,0)$, $I = [0,1] \times \{0\}$ and $U = \mathbb{R}^2$.
Denote $T_1 = \{t\}$ and denote $f_1(x) = (1,0)$ for all $x \in X$.

Now suppose that for some $n \in \mathbb{N}$ we have an oak $T_n$ and a $(T_n,X^n,Q^n)$-lifting $f_n : X^n \to Y(T_n)$.

By Lemma \ref{AL1} there is an oak $T_{n+1} \succeq T_n$ and a $(T_{n+1},X^n,Q^n,\mathcal{W}_n)$-splitting $(T_n,T_{n+1})$-evolution $g_n$ of the function $f_n$ such that $||f_n - g_n||<1/2^n$. Define the function $f_{n+1} : X^{n+1} \to Y(T_{n+1})$ in such a way that $f_{n+1}(x_1,\ldots,x_{n+1}) = g_n(x_1,\ldots,x_n)$,
so the pair $f_{n+1}, T_{n+1}$ satisfies the recursion assumption.

Now for all $n \in \mathbb{N}$ define the functions $h_n : X^\omega \to Y(T_n)$ as follows: $h_n(x_1,x_2,\ldots) = f_n(x_1,\ldots,x_n)$.
We have $||h_{n+1} - h_n|| = ||g_n - f_n||< 1/2^n$, so the sequence $(h_n)_{n \in \mathbb{N}}$ uniformly converges to some continuous function $f : X^\omega \to \mathbb{R}^2$.

Let us show that for any distinct $a,b \in Q^\omega$ we have $f(a) \ne f(b)$.
Take $n \in \mathbb{N}$ such that no element of $\mathcal{W}_n$ contains both $a|_n$ and $b|_n$.
Choose $t_a = (p_a, I_a, U_a)$ and $t_b = (p_b, I_b, U_b)$ in $T_{n+1}$ such that $g_n(a|_n) \in I_a$ and $g_n(b|_n) \in I_b$.
Since $g_n$ is $(T_{n+1},X^n,Q^n,\mathcal{W}_n)$-splitting, $t_a$ and $t_b$ are distinct leaves of $T_{n+1}$, so $\overline{U_a} \cap \overline{U_b} = \emptyset$.

It remains to show that $g(a) \in \overline{U_a}$ and $g(b) \in \overline{U_b}$. We shall prove the first condition, since the proof of the second one is the same.
For every $k \in \mathbb{N}$ take the leaf $t_k = (p_k, I_k, U_k)$ of $T_k$ such that $f_k(a|_k) \in I_k$. Since $f_{k+1}(a|_{k+1}) = g_k(a|_k)$, we conclude, by the choice of $g_k$, that $t_{k+1}$ is a son of $t_k$, so $\overline{U_{k+1}} \subseteq U_k$. It follows that $f_k(a|_k) \in U_{n+1} = U_a$ for all $k > n$, hence $f(a) \in \overline{U_a}$.
\end{proof}

Of course, Theorem \ref{AT1} may be extended to products of distinct spaces.

\begin{corollary}
\label{AC1}
Suppose that we are given a Tychonoff space $X_n$ and a countable set $Q_n \subseteq X_n$ for every $n \in \omega$.
Then there is a continuous function $f : \prod\limits_{n \in \omega} X_n \to \mathbb{R}^2$ such that its restriction to $\prod\limits_{n \in \omega} Q_n$ is injective.
\end{corollary}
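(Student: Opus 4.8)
The plan is to deduce Corollary \ref{AC1} from Theorem \ref{AT1} by realizing all the spaces $X_n$ simultaneously as clopen subspaces of a single Tychonoff space. Concretely, I would let $X = \bigsqcup_{n\in\omega} X_n$ be the topological sum (free union) of the $X_n$, with canonical inclusions $j_n : X_n \hookrightarrow X$. Since $T_1$-ness and complete regularity are both preserved under topological sums (a continuous $[0,1]$-valued function on a clopen summand extends by the constant $1$ to all of $X$), the space $X$ is Tychonoff, and $Q := \bigcup_{n\in\omega} j_n[Q_n]$ is a countable subset of $X$. Applying Theorem \ref{AT1} to the pair $(X,Q)$ yields a continuous function $F : X^\omega \to \mathbb{R}^2$ whose restriction to $Q^\omega$ is injective.

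The second step is to link $\prod_{n\in\omega} X_n$ with $X^\omega$ through the coordinatewise inclusion map $\iota : \prod_{n\in\omega} X_n \to X^\omega$ defined by $\iota\big((x_n)_n\big) = \big(j_n(x_n)\big)_n$. I would check that $\iota$ is continuous, since the $\iota$-preimage of a basic open box $\prod_n W_n$ in $X^\omega$ (with $W_n$ open in $X$ and $W_n = X$ for all but finitely many $n$) is exactly the basic open box $\prod_n j_n^{-1}[W_n]$ in $\prod_n X_n$; that $\iota$ is injective, because each $j_n$ is; and that $\iota$ carries $\prod_{n\in\omega} Q_n$ into $Q^\omega$, because $j_n[Q_n]\subseteq Q$ for every $n$. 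Then the composition $f := F\circ\iota : \prod_{n\in\omega} X_n \to \mathbb{R}^2$ is continuous, and for distinct $a,b\in\prod_{n\in\omega} Q_n$ the points $\iota(a)\ne\iota(b)$ both lie in $Q^\omega$, whence $f(a)=F(\iota(a))\ne F(\iota(b))=f(b)$. Thus $f$ restricted to $\prod_{n\in\omega} Q_n$ is injective, which is what the corollary asserts. (If some $X_n$ is empty the product is empty and there is nothing to prove; in any case the argument above does not use nonemptiness.)

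I do not expect a genuine obstacle here — this is the routine extension promised in the sentence preceding the corollary. The only points needing verification are standard: that a topological sum of Tychonoff spaces is again Tychonoff, and the continuity and injectivity of $\iota$ together with $\iota\big[\prod_{n\in\omega} Q_n\big]\subseteq Q^\omega$. It is worth stressing that $\iota$ need not be a topological embedding for this to work: continuity of $\iota$ and injectivity of $F$ on $Q^\omega$ already give everything, since the property ``$f|_{\prod Q_n}$ is injective'' only uses that $\iota$ is injective on $\prod_{n\in\omega} Q_n$ and lands inside the set where $F$ is injective.
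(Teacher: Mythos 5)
Your proposal is correct and is essentially the paper's own argument: the paper likewise assumes the $X_n$ are pairwise disjoint subspaces of a single space $X$ (your topological sum), applies Theorem \ref{AT1} to $X$ and $Q=\bigcup_n Q_n$, and then restricts along the natural inclusion of $\prod_n X_n$ into $X^\omega$. You merely spell out the routine verifications (Tychonoffness of the sum, continuity and injectivity of $\iota$) that the paper leaves implicit.
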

\begin{proof}
We can suppose that all $X_n$ are pairwise disjoint subsets of some space $X$.
Applying Theorem \ref{AT1} to $X$ and $Q = \bigcup\limits_{n \in \omega} Q_n$, we obtain this corollary.
\end{proof}

\section{Relatively functionally countable subsets of products}
\label{B}

\begin{theorem}
\label{BT1}
Suppose we are given Tychonoff spaces $X_n$ for all $n \in \omega$ and all RFC subsets of every $X_n$ are countable.
Then all RFC subsets of $\prod\limits_{n \in \omega} X_n$ are countable.
\end{theorem}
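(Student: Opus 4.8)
The plan is to reduce the general product case to Corollary \ref{AC1} by first trapping an arbitrary RFC subset $A \subseteq \prod_{n\in\omega} X_n$ inside a subproduct of countable sets, and then using a single continuous map into $\mathbb{R}^2$ that is injective on that subproduct.

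First I would observe that the coordinate projections propagate the RFC property. Let $\pi_n : \prod_{k\in\omega} X_k \to X_n$ be the $n$-th projection and put $Q_n = \pi_n[A]$. If $g : X_n \to \mathbb{R}$ is continuous, then $g \circ \pi_n : \prod_{k\in\omega} X_k \to \mathbb{R}$ is continuous, so $g[Q_n] = (g\circ \pi_n)[A]$ is countable because $A$ is RFC. Hence $Q_n$ is an RFC subset of $X_n$, and by hypothesis $Q_n$ is countable. Clearly $A \subseteq \prod_{n\in\omega} Q_n$.

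Next I would invoke Corollary \ref{AC1} applied to the spaces $X_n$ and the countable sets $Q_n$: there is a continuous function $f : \prod_{n\in\omega} X_n \to \mathbb{R}^2$ whose restriction to $\prod_{n\in\omega} Q_n$ is injective. Since $A \subseteq \prod_{n\in\omega} Q_n$, the restriction $f|_A$ is injective as well. On the other hand, $A$ is RFC, so by Proposition \ref{0P2} the image $f[A] \subseteq \mathbb{R}^2$ is countable. Therefore $|A| = |f[A]| \le \omega$, i.e. $A$ is countable.

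There is no real obstacle left at this stage: all the technical difficulty has already been absorbed into Theorem \ref{AT1} and its Corollary \ref{AC1}, which supply the key continuous map into the plane that separates points of the subproduct. The only point that needs to be checked carefully is the first step — that projections (more generally, continuous images) of RFC sets remain RFC — and this is an immediate consequence of the definition together with the fact that a composition of continuous functions is continuous. I would present the argument exactly in this order: projections of $A$ are countable, so $A$ sits inside $\prod_n Q_n$; apply Corollary \ref{AC1} to get $f$; use Proposition \ref{0P2} to conclude $f[A]$ is countable; conclude by injectivity of $f|_A$.
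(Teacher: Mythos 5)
Your proposal is correct and follows essentially the same route as the paper: project $A$ to get countable sets $Q_n$ (the paper leaves the RFC-propagation step implicit, which you spell out), trap $A$ in $\prod_{n\in\omega} Q_n$, apply Corollary \ref{AC1} to obtain a continuous $f$ into $\mathbb{R}^2$ injective on that subproduct, and conclude via Proposition \ref{0P2}. No gaps; nothing further is needed.
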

\begin{proof}
Suppose a set $A \subseteq \prod\limits_{n \in \omega} X_n = Y$ is RFC.
In particular, for every projection $\pi_n : Y \to X_n$, defined as $\pi_n(x_1,x_2,\ldots) = x_n$, the set $Q_n = \pi_n[A]$ is countable.

By Corollary \ref{AC1} there is a continuous function $f : Y \to \mathbb{R}^2$ such that its restriction to $\prod\limits_{n \in \omega} Q_n$ (and hence to $A$) is injective. By Proposition \ref{0P2} the set $f[A]$ is countable, so $A$ is countable too.
\end{proof}

In particular, the answer to Question \ref{IQ1} is no.

\begin{corollary}
Every RFC subset of a metrizable space is countable.
\end{corollary}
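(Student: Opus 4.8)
The plan is to cut the problem down to the separable case and then apply Theorem~\ref{BT1} with every factor equal to $\mathbb{R}$. Let $M$ be metrizable, fix a compatible metric $d$ on $M$, and let $A\subseteq M$ be RFC. Since a metrizable space is normal, the Tietze extension theorem lets every continuous real-valued function on the closed set $\overline{A}$ be extended over $M$; therefore $f[A]$ is countable for every continuous $f:\overline{A}\to\mathbb{R}$, i.e.\ $A$ is RFC in $\overline{A}$. So we may assume from the start that $M=\overline{A}$, that is, $A$ is dense in $M$.

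Next I would prove that $M$ is separable. For each integer $n\geqslant 1$ choose, by Zorn's lemma, a set $B_n\subseteq A$ that is maximal among the subsets of $A$ in which any two distinct points are at distance at least $1/n$. Such a $B_n$ is a discrete subspace of $M$, and it is closed in $M$ (a convergent sequence inside a set whose distinct points are $1/n$ apart is eventually constant). Hence any injection $B_n\to\mathbb{R}$ is continuous on $B_n$ and, $B_n$ being closed in the normal space $M$, extends to a continuous $g:M\to\mathbb{R}$; as $A$ is RFC, $g[A]\supseteq g[B_n]$ must be countable, so $B_n$ is countable. By maximality every point of $A$ lies within distance $1/n$ of $B_n$ for every $n$, so $D=\bigcup_{n\geqslant 1}B_n$ is a countable dense subset of $A$, hence of $M$. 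Thus $M$ is a separable metrizable space.

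Consequently $M$ admits a homeomorphic embedding $e:M\to\mathbb{R}^{\omega}$. For every continuous $g:\mathbb{R}^{\omega}\to\mathbb{R}$ the composition $g\circ e:M\to\mathbb{R}$ is continuous, so $g\bigl[e[A]\bigr]=(g\circ e)[A]$ is countable; hence $e[A]$ is RFC in $\mathbb{R}^{\omega}$. Each factor $\mathbb{R}$ is Tychonoff, and its only RFC subsets are the countable ones (apply the identity map), so Theorem~\ref{BT1} tells us that every RFC subset of $\mathbb{R}^{\omega}$ is countable. In particular $e[A]$ is countable, and since $e$ is injective, so is $A$.

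The one genuinely non-formal step is the passage to the separable case in the second paragraph: Theorem~\ref{BT1} is a statement about countable products, and a non-separable metrizable space is not a subspace of any countable product of separable spaces, so $M$ cannot be embedded into $\mathbb{R}^{\omega}$ directly. The closed-discrete-set-plus-Tietze argument is exactly what rules out an uncountable uniformly discrete subset of $A$ and thereby forces $\overline{A}$ to be separable, putting us within reach of Theorem~\ref{BT1}. (Alternatively one could embed $M$ into a countable power $J(\kappa)^{\omega}$ of a hedgehog of large enough spininess $\kappa$ and check, by the same trick, that all RFC subsets of $J(\kappa)$ are countable; the separable reduction seems cleaner.)
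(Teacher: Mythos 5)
Your proof is correct, and it reaches the conclusion by a genuinely different route from the paper's. The paper invokes the hedgehog embedding theorem to place an arbitrary metrizable $X$ inside $H^\omega$ for a metrizable hedgehog $H$, and then verifies the hypothesis of Theorem~\ref{BT1} for the factor $H$ by retracting onto a sub-hedgehog with at most $\omega_1$ spines and injecting that sub-hedgehog continuously into the plane. You instead first shrink the ambient space: Tietze reduces the problem to $M=\overline{A}$, the maximal $(1/n)$-separated sets $B_n\subseteq A$ are closed and discrete, an extension of an injection $B_n\to\mathbb{R}$ then shows via the RFC property that each $B_n$ is countable, and the resulting countable dense set makes $M$ separable, hence embeddable into $\mathbb{R}^\omega$, where Theorem~\ref{BT1} applies with the trivially verified factor $\mathbb{R}$. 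Your version avoids hedgehogs entirely and replaces the retraction-plus-planar-injection step by the elementary observation that an RFC set cannot contain an uncountable uniformly discrete subset; the price is the extra separability reduction, whereas the paper applies Theorem~\ref{BT1} to $H^\omega$ directly without first analysing $A$. One small point to patch: ``any injection $B_n\to\mathbb{R}$'' tacitly assumes $|B_n|\leqslant\mathfrak{c}$, and if $B_n$ were larger no such injection would exist. The repair is the same device the paper uses for the hedgehog: if $B_n$ is uncountable, pass to a subset of cardinality $\omega_1$ (still closed and discrete), inject and extend that subset, and contradict the RFC property of $A$. With that one-line fix the argument is complete.
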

\begin{proof}
Let $X$ be a metrizable space.
By \cite[Theorem 4.4.9]{engelking} we may suppose that $X$ is a subspace of $H^\omega$ for some metrizable hedgehog $H$.
All RFC subsets of $X$ are RFC in $H^\omega$ as well.
By Theorem \ref{BT1}, to prove that all these sets are countable, it suffices to show that every RFC subset of $H$ is countable.
So, let us take an arbitrary uncountable set $A \subseteq H$ and show that $A$ is not RFC.

Choose any $B \subseteq A$ such that $|B|=\omega_1$.
Take a hedgehog $J \subseteq H$ such that $B \subseteq J$ and $J$ has no more than $\omega_1$-many spines.
Fix any retraction $r$ of $H$ onto $J$.
It is well known and easy to prove that there is a continuous injection $f : J \to \mathbb{R}^2$.
The function $g : H \to \mathbb{R}^2$, defined as $g(x) = f(r(x))$, is continuous and the set $g[A]$ is uncountable.
According to Proposition \ref{0P2} the set $A$ is not RFC.
\end{proof}

Clearly, there is no possibility to replace $\omega$ by $\omega_1$ in Theorem \ref{BT1}, since the uncountable functionally countable space $\omega_1$ can be embedded into $X^{\omega_1}$ whenever $X$ is Hausdorff and $|X|>1$.

\begin{question}[$\neg CH$]
Is there a set $A \subseteq [0,1]^\omega$ such that $|A|=\frak{c}$ and for every continuous function $f : [0,1]^\omega \to \mathbb{R}$ we have $|f[A]| < \frak{c}$?
\end{question}

More generally,

\begin{question}[$\neg CH$]
Is there a metrizable space $X$ and its subset $A$ such that for every continuous $f : X \to \mathbb{R}$ we have $|f[A]|<|A| \leq \frak{c}$?
\end{question}

\section{Why $\mathbb{R}^2$ or a few words on maps $\mathbb{R}^\omega \to \mathbb{R}$}
\label{C}

One may ask a natural question whether $\mathbb{R}^2$ in Theorem \ref{AT1} can be replaced by $\mathbb{R}$.
Let us show that the answer is negative.

\begin{theorem}
	For any continuous function $f : \mathbb{R}^\omega \to \mathbb{R}$ the restriction of $f$ to $\mathbb{Q}^\omega$ is not injective.
\end{theorem}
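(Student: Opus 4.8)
The plan is to exploit the fact that $\mathbb{Q}^\omega$ is not locally compact at any point, while a continuous real-valued function on $\mathbb{R}^\omega$ is, in a suitable sense, determined by only finitely many coordinates near any point. More precisely, I would first establish the standard local fact: for any continuous $f : \mathbb{R}^\omega \to \mathbb{R}$ and any point $x \in \mathbb{R}^\omega$, and any $\varepsilon > 0$, there is a basic open neighborhood $W$ of $x$ — depending only on coordinates in some finite set $S \subseteq \omega$ — with $\mathrm{diam}(f[W]) < \varepsilon$. This is immediate from continuity together with the definition of the product topology on $\mathbb{R}^\omega$.

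Next I would use this to build, by recursion on $n \in \omega$, two points $a, b \in \mathbb{Q}^\omega$ that differ in infinitely many coordinates yet satisfy $f(a) = f(b)$; in fact I would construct a whole nested sequence of basic clopen-in-$\mathbb{Q}^\omega$ boxes. Start with $S_0 = \emptyset$. Having fixed a finite set $S_n \subseteq \omega$ together with a choice of rational values on $S_n$ (equivalently, a basic box $B_n$ in $\mathbb{Q}^\omega$ with support $S_n$), apply the local fact at any point of $B_n$ with $\varepsilon = 2^{-n}$: shrinking, I get a finite $S_{n+1} \supsetneq S_n$ and a choice of rational values on $S_{n+1} \setminus S_n$ defining $B_{n+1} \subseteq B_n$ of support $S_{n+1}$ such that $\mathrm{diam}(f[B_{n+1}]) < 2^{-n}$ — and crucially I may require $S_{n+1}$ to strictly contain $S_n$, since enlarging the support set only shrinks the box. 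Then $\bigcap_n B_n$ consists of exactly the points of $\mathbb{Q}^\omega$ whose restriction to $\bigcup_n S_n$ equals a fixed sequence $\sigma$, while coordinates outside $\bigcup_n S_n$ are free. If $\bigcup_n S_n \ne \omega$, pick two points $a \ne b$ in this intersection differing only off $\bigcup_n S_n$; if $\bigcup_n S_n = \omega$, at the step where I pass from $S_n$ to $S_{n+1}$ I can instead ensure $|S_{n+1} \setminus S_n| \ge 2$ and record two distinct rational choices on one of the new coordinates, yielding two nested sequences of boxes with a common point in $\mathbb{Q}^\omega$ on each, hence two distinct points $a, b$. In either case $f(a), f(b) \in f[B_n]$ for all $n$, so $|f(a) - f(b)| < 2^{-n}$ for every $n$, whence $f(a) = f(b)$ with $a \ne b$.

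The main obstacle to get right is the bookkeeping that guarantees $a \ne b$ — i.e.\ ensuring the construction genuinely "leaves room" in infinitely many coordinates rather than pinning down the whole of $\mathbb{Q}^\omega$'s coordinates. The clean way to handle this is to interleave: at stage $n$ reserve coordinate index $k_n$ (the $n$-th unused index) to be \emph{never} added to any $S_m$ as a \emph{constrained} coordinate, or more simply, at the end split on the first coordinate not in $\bigcup_n S_n$ if that set is coinfinite, and otherwise force each step to add at least two indices and branch on one of them — the diameters still shrink geometrically along each branch. Everything else (continuity of $f$, the product-topology description of basic neighborhoods, completeness-style extraction of the limit point inside $\mathbb{Q}^\omega$, which here is trivial because each box is determined by finitely many rational coordinates) is routine. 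No appeal to the earlier machinery of the paper is needed; this is a direct diagonalization against the finite-support behavior of continuous maps $\mathbb{R}^\omega \to \mathbb{R}$, and it is exactly the obstruction that forced $\mathbb{R}^2$ rather than $\mathbb{R}$ as the target in Theorem \ref{AT1}.
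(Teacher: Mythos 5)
Your recursion is fine as long as $\bigcup_n S_n \neq \omega$, but the case $\bigcup_n S_n = \omega$ is precisely where the argument breaks, and neither of your proposed fixes closes it. You cannot ``reserve'' an index $k_n$ and forbid it from ever being constrained: the support $S$ of the small-oscillation neighborhood $W$ is dictated by the continuity of $f$ at the chosen point, not by you (if $f$ behaves near that point like the projection onto coordinate $k_n$, every neighborhood on which $f$ has small oscillation must constrain $k_n$). The branching fix also fails: if at some stage $n_0$ you record two distinct rational values on a new coordinate and then continue the construction, the points $a$ and $b$ lie in \emph{different} boxes from stage $n_0+1$ onward, so the later diameter bounds apply to each branch separately and nothing forces the images of the two branches to approach each other; the only estimate you keep is the one from the single stage $n_0$. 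In particular the final claim ``$f(a),f(b)\in f[B_n]$ for all $n$, so $|f(a)-f(b)|<2^{-n}$ for every $n$'' is false in the branching scenario, and one cannot conclude $f(a)=f(b)$.

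There is also a structural reason why no amount of bookkeeping can rescue this approach as it stands: every step uses only the oscillation of $f$ on subsets of $\mathbb{Q}^\omega$ (the boxes $B_n$ are contained in $W\cap\mathbb{Q}^\omega$, and $a,b\in\mathbb{Q}^\omega$), i.e.\ only the continuity of the restriction $f|_{\mathbb{Q}^\omega}$. But the theorem is false at that level of generality: $\mathbb{Q}^\omega$ is a zero-dimensional separable metrizable space, hence it admits a continuous injection into the Cantor set and therefore into $\mathbb{R}$. So any correct proof must exploit the values of $f$ at points with irrational coordinates. The paper does this via connectedness: the images $f[H(x_0,\ldots,x_{n-1})]$ of the \emph{full} cylinders of $\mathbb{R}^\omega$ are intervals, and density of the rational subcylinders allows one to choose $a_n\neq b_n$ so that the corresponding intervals keep overlapping in a nontrivial interval, which is what ultimately forces $f(p)=f(q)$. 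The ``finite-support'' behavior of continuous maps on products that you diagonalize against is real, but by itself it is not the obstruction; some use of connectedness of the ambient cylinders (or a substitute for it) is essential.
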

\begin{proof}
	For all $x_0,\ldots,x_{n-1} \in \mathbb{Q}$ let us denote $$H(x_0,\ldots,x_{n-1}) = \{p \in \mathbb{R}^\omega : p|_n = (x_0,\ldots,x_{n-1})\}.$$
	In particular, for $n=0$ we define $H() = \mathbb{R}^\omega$.
	Every set $f[H(x_0,\ldots,x_{n-1})]$ is connected as an image of a connected set.
	If some of these sets is a singleton, then $f|_{\mathbb{Q}^\omega}$ is not injective, so we may suppose that all these sets are nontrivial intervals.
	
	Now let us construct two distinct sequences $p = (a_n)_{n \in \omega}$ and $q = (b_n)_{n \in \omega}$ of rational numbers with the property that for all $n \in \omega$ the set $$L_n = f[H(a_0, \ldots, a_{n-1})] \cap f[H(b_0, \ldots, b_{n-1})]$$ is a nontrivial interval.
	
	This property clearly holds for $n = 0$. Now suppose we are given $a_0, \ldots, a_{n-1}$ and $b_0, \ldots, b_{n-1}$ and let us show that it is possible to choose $a_n \ne b_n$.
	
	First, the set $\bigcup_{x \in \mathbb{Q}} H(a_0, \ldots, a_{n-1}, x)$ is dense in $H(a_0, \ldots, a_{n-1})$, so its image is dense in $f[H(a_0, \ldots, a_{n-1})]$. Consequently, there is $a_n \in \mathbb{Q}$ such that the interval $f[H(a_0, \ldots, a_n)]$ contains some interior point of $L_n$, so the interval $$M_n = f[H(a_0, \ldots, a_n)] \cap L_n = f[H(a_0, \ldots, a_n)] \cap f[H(b_0, \ldots, b_{n-1})]$$ is nontrivial.
	
	Similarly, the set $\bigcup_{x \in \mathbb{Q} \setminus \{a_n\}} H(b_0, \ldots, b_{n-1}, x)$ is dense in $H(b_0, \ldots, b_{n-1})$, so there is $b_n \in \mathbb{Q} \setminus \{a_n\}$ such that the interval $f[H(b_0, \ldots, b_n)]$ contains some interior point of $M_n$, hence the interval $M_n \cap f[H(b_0, \ldots, b_n)] = L_{n+1}$ is nontrivial.
	
	Sequences $p \ne q$ are constructed. Let us show that $f(p) = f(q)$. Suppose, on the contrary, that $f(p) \ne f(q)$. It follows that there are neighborhoods $U$ and $V$ of the points $f(p)$ and $f(q)$ in $\mathbb{R}$ such that $U \cap V = \emptyset$. Take $n \in \omega$ such that $H(p|_n) \subseteq f^{-1}[U]$ and $H(q|_n) \subseteq f^{-1}[V]$. We have $f[H(p|_n)] \cap f[H(q|_n)] = \emptyset$, and this is a contradiction.
\end{proof}

\medskip

\noindent {\bf Acknowledgements.} I am grateful to Alexander~V. Osipov for constant attention to this work and to the referee, who made me lay out the reasoning much more accurately.

\bibliographystyle{model1a-num-names}
\bibliography{<your-bib-database>}

\begin{thebibliography}{10}

\bibitem{engelking} R. Engelking, General Topology, Revised and completed edition, Heldermann Verlag Berlin (1989).

\bibitem{jech} T.~Jech, Set Theory: The third millenium edition (3rd ed.). Berlin: Springer-Verlag (2006).

\bibitem{kunen} Kenneth Kunen, Set Theory, Studies in Logic: Mathematical Logic and Foundations, Vol. 34, College Publications, London, 2011, viii + 401 pp.

\bibitem{tkachuk} V.V.~Tkachuk, A nice subclass of functionally countable spaces, {\it Comment. Math. Univ. Carolin.}, 59,3 (2018) 399--409.

\bibitem{mathoverflow} https://mathoverflow.net/questions/330255/on-projectively-countable-sets-in-the-hilbert-cube
\end{thebibliography}

\end{document}